\long\def\eatit#1{}
\newtheorem{Thm}{Theorem}[section]
\newtheorem{Prop}[Thm]{Proposition}
\newtheorem{Lem}[Thm]{Lemma}
\theoremstyle{definition}
\newtheorem{Def}[Thm]{Definition}
\newtheorem{Ex}[Thm]{Example}
\newtheorem{Rmk}[Thm]{Remark}
\newcommand{\RR}{{\mathbb{R}}}
\newcommand{\NN}{{\mathbb{N}}}
\newcommand{\TT}{{\mathcal{T}}}
\newcommand{\PPP}{{\mathbb{P}}}
\begin{document}
\title{Waldschmidt constants for Stanley-Reisner ideals of a class of polytopes}

\author{Cristiano Bocci and Barbara Franci}

\address{Cristiano Bocci\\
Department of Information Engineering and Mathematics, University of Siena\\
Via Roma 56, 53100  Siena, Italy}
\email{cristiano.bocci@unisi.it}

\address{Barbara Franci\\
Department of Mathematics, Politecnico di Torino \\
Corso Duca degli Abruzzi 24,  10129 Torino, Italy}
\email{barbara.franci@polito.it}

\begin{abstract}
We study the symbolic powers of  the Stanley-Reisner ideal $I_{B_n}$ of a bipyramid $B_n$ over a $n-$gon $Q_n$. 
Using a combinatorial approach, based on analysis of subtrees in $Q_n$ we compute the Waldschmidt constant of $I_{B_n}$.
\end{abstract}

%\date{February 23, 2015}

\maketitle

\section{Introduction}
Comparing the behavior of symbolic and regular powers of a homogeneous ideal has become an important key to understanding many problems in commutative algebra and algebraic geometry. 
To be more precise, let $R=k[x_0, \dots, x_n]$, where $k$ is a field and $0\not= I\subset R$ be a homogeneous ideal. We define the $m-$th symbolic power of $I$ to be
\[
I^{(m)}=R\cap \bigcap_{P\in Ass(I)}I^mR_P.
\]
The containment problem concerns with the study of pairs $(m,r)$ such that $I^{(m)}\subset I^r$.

Among the known results is the celebrated containment of Ein-Lazarsfeld-Smith \cite{ELS} and Hochster-Huneke \cite{HH} stating that $I^{(er)}\subset I^r$, where $e$ is the codimension of the ideal $I$.  Much effort has been put towards tightening this containment (\cite{BCH,BH,HHu}) and many conjectures arise ( \cite{BCH,DST,HHu, HS}).
The approach introduced in \cite{BH} permits to use the informations given by the invariants of $I$ to study the containment problem for $I$. Among these invariants there is the so called Waldschmidt constant $\gamma(I)$:
$$\gamma(I)=\lim_{m \to \infty}\alpha(I^{(m)})/m.$$   
where $\alpha(I^{m)})=min\{t:I^{(m)}_t\neq 0\}$.

In the case of an ideal of points, $\gamma(I)$ is strictly related to the multi-point seshadri constant.
However,  this invariant is hard to compute and only few cases are known.

Recently, in \cite{Cooper}, the authors study the containment problem for the class of square-free monomial ideals and prove that all conjectures in \cite{HHu} hold for this class.  Such class of ideals is strictly connected with graphs and simplicial complex: each graph $G$ and  each simplicial complex $\Delta$ have associated square-free monomial ideals called respectively the edge ideal $I_G$ of $G$ and Stanley-Reisner ideal $I_\Delta$ of $\Delta$. Viceversa, to each square-free monomial ideal, we can associate a graph and a simplicial complex.

In literature there are many papers concerning the relationships between the combinatorial properties of the graph or of the simplicial complex and the algebraic properties of their ideals (see \cite{villa} for a survey). It is know, for example, that $I_G^{(m)}=I_G^m$ if and only $G$ is a bipartite graph. 
Moreover the regularity of $R/I_G$ is strictly related to the induced matching of $G$.

Moving in this direction, it is of a sort of interest to investigate if the Waldschmidt constant of a monomial ideal can be expressed in term of combinatorial data of the graph or of the simplicial complex.
This short paper wants to be a starting points in this research. Here we focus on the Stanley-Reisner ideal of a particular class of simplicial complexes: a bipyramid $B_n$ over a $n-$gon.
This class represents a first non trivial case where we can use a pure combinatorial approach to the study of $\alpha(I_{B_n}^{(m)})$. In particular, using a set of subtrees contained in the base $n-$gon, we are able to establish a behavior of the degree of a minimal generator for some symbolic powers of $I_{B_n}$, depending on the parity of $n$:
\begin{itemize} 
\item[i)] $\alpha(B_{2k}^{(s(k-1))})=sk$
\item[ii)] $\alpha(B_{2k-1}^{(s(k-3))})=s(2k-1)$.
\end{itemize}

Using these equalities we  prove the following result.
\begin{Thm}\label{main}
$$\gamma(I_{B_n})=\frac{n}{n-2} \hspace{.2cm} \forall n.$$
\end{Thm}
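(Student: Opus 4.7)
The plan is to sandwich $\alpha(I_{B_n}^{(m)})$ between matching quantities that both grow like $nm/(n-2)$. The upper bound $\gamma(I_{B_n}) \leq n/(n-2)$ comes essentially for free from the identities (i) and (ii): the sequence $\{\alpha(I^{(m)})\}$ is subadditive in $m$, so by Fekete's lemma $\gamma(I) = \inf_m \alpha(I^{(m)})/m$, and (i), (ii) exhibit infinitely many values of $m$ at which the ratio $\alpha(I_{B_n}^{(m)})/m$ equals $n/(n-2)$.

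For the matching lower bound, the first move is to translate $\alpha(I_{B_n}^{(m)})$ into an integer covering program. Because $I_{B_n}$ is a square-free monomial ideal, its minimal primes are exactly $\mathfrak{p}_F = (x_v : v \notin F)$ for $F$ ranging over facets of $B_n$, and $I_{B_n}^{(m)} = \bigcap_F \mathfrak{p}_F^m$. Consequently a monomial $\mathbf{x}^{\mathbf{c}}$ lies in $I_{B_n}^{(m)}$ iff $\sum_{v \notin F} c_v \geq m$ for every facet $F$, so $\alpha(I_{B_n}^{(m)})$ equals the minimum of $\sum_v c_v$ over non-negative integer $\mathbf{c}$ satisfying these $2n$ inequalities. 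The strategy is to relax integrality and exhibit a dual feasible solution.

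The natural candidate is the symmetric dual, assigning every facet the same weight $y_F = 1/(2n-4)$. Each base-polygon vertex sits in four facets (two adjacent base-edges paired with two apices) and is missed by $2n-4$ of them, so its dual constraint $\sum_{F \not\ni v} y_F \leq 1$ is tight; each apex sits in $n$ facets and is missed by $n$, giving $n/(2n-4) \leq 1$ provided $n \geq 4$. The dual objective is then $m \cdot 2n/(2n-4) = nm/(n-2)$, which by LP weak duality bounds $\alpha(I_{B_n}^{(m)})$ from below, yielding $\gamma(I_{B_n}) \geq n/(n-2)$. Combined with the upper bound, this settles the theorem for $n \geq 4$; the case $n = 3$, where the uniform dual is infeasible at the apices, requires a direct short check.

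The main obstacle lies entirely in identities (i) and (ii): the combinatorial construction, via subtrees of $Q_n$, of symbolic-power monomials achieving the predicted degree is what will demand a case split on the parity of $n$ and the full subtree-counting bookkeeping. Once those are established, the completion of the theorem is immediate: subadditivity on one side, and a single uniform fractional facet-cover on the other.
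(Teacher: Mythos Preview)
Your argument is correct (for $n\geq 4$), but it takes a genuinely different route from the paper for the lower bound. The paper's proof is shorter: since the limit $\gamma(I_{B_n})=\lim_m \alpha(I_{B_n}^{(m)})/m$ is known to exist (this is stated after Definition~\ref{defW}, with a reference), it suffices to evaluate it along \emph{any} subsequence. The paper simply plugs in the exact values from (i) and (ii), getting $\gamma(I_{B_{2k}})=\lim_s sk/(s(k-1))=k/(k-1)$ and $\gamma(I_{B_{2k-1}})=\lim_s s(2k-1)/(s(2k-3))=(2k-1)/(2k-3)$, both equal to $n/(n-2)$. No separate upper/lower bound, no LP, no dual solution.

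Your approach trades this for an LP/fractional-cover argument. That buys something: your lower bound $\alpha(I_{B_n}^{(m)})\geq nm/(n-2)$ holds for \emph{every} $m$, not just along the subsequence, and it does not use (i) or (ii) at all. Consequently your proof of the theorem only needs the \emph{easy} direction of (i) and (ii) --- the explicit monomials $x_1x_3\cdots x_{2k-1}$ (even case) and $x_1\cdots x_{2k-1}$ (odd case) witnessing $\alpha\leq sk$ or $\alpha\leq s(2k-1)$ --- whereas the paper's subsequence argument requires the full equalities, i.e.\ also the hard inductive lower bounds in Propositions~\ref{casodispari} and~\ref{casopari}. So your route is more robust and closer in spirit to the general fractional-chromatic-number approach to Waldschmidt constants of squarefree monomial ideals, while the paper's route is more elementary and entirely self-contained once (i) and (ii) are in hand.

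One small caution on $n=3$: you say it ``requires a direct short check,'' but in fact $I_{B_3}=(x_0x_4,\,x_1x_2x_3)$ is a complete intersection with $\gamma(I_{B_3})=2\neq 3=n/(n-2)$, as the paper itself notes before Proposition~\ref{casodispari}. So the theorem (and your dual feasibility) should be read as holding for $n\geq 4$; there is nothing to salvage at $n=3$.
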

In particular, even though the computation of the $\alpha$ reveals different behavior according if the number $n$ of vertices in the base is even or odd, the Waldschmidt constant depends only on $n$.

\section{Preliminaries}

\subsection{Simplicial complexes and Stanely-Reisner ideals} We assume that the basic facts about polytopes and simplicial complexes are known and we suggest \cite{Zie} and \cite{Miller} as references.

We start recalling the definition of bipyramid.
\begin{Def}
The bipyramid over a polytope $P$, $bipyr(P)$ is the convex hull of $P$ and any line segment that pierces the interior of $P$ at precisely one point. 
\end{Def}

\begin{Ex}\rm
Assume that the origin in $\RR^d$ is in the interior of $P$ and embed $P$ in $\RR^{d+1}$. Let $T$ be the segment $[-e_{d+1}, e_{d+1}]\subset \RR^{d+1}$, where $e_{d+1}$ represent the $(d+1)$-th coordinate point. Then $bipyr(P)$ is the convex hull conv$(P,T)$.
\end{Ex}

\begin{Def}\label{nostrobn}
Let $Q_n$ be a $n-$gon in $\RR^2$, with vertices $\{1, \dots, n\}$ containing the origin and embedded in $\RR^3$. We denote by $B_n$ the bipyramid over $Q_n$, that is the convex hull 
\[
B_n=\mbox{conv}\left(Q_n, (0,0,1), (0,0,-1)\right).
\]
The two extra vertices of $B_n$, in the point $(0,0,1)$  and $(0,0,-1)$ will be labeled by $0$ and $n+1$. We will refer to  $\{0\}$ and $\{n+1\}$ as, respectively, the upper and lower vertex of  $B_n$.
\end{Def}

\begin{Ex}\rm
The following figures show respectively $B_5$ and $B_8$.
\begin{center}
\begin{tabular}{ccc}
\includegraphics[width=5.4cm]{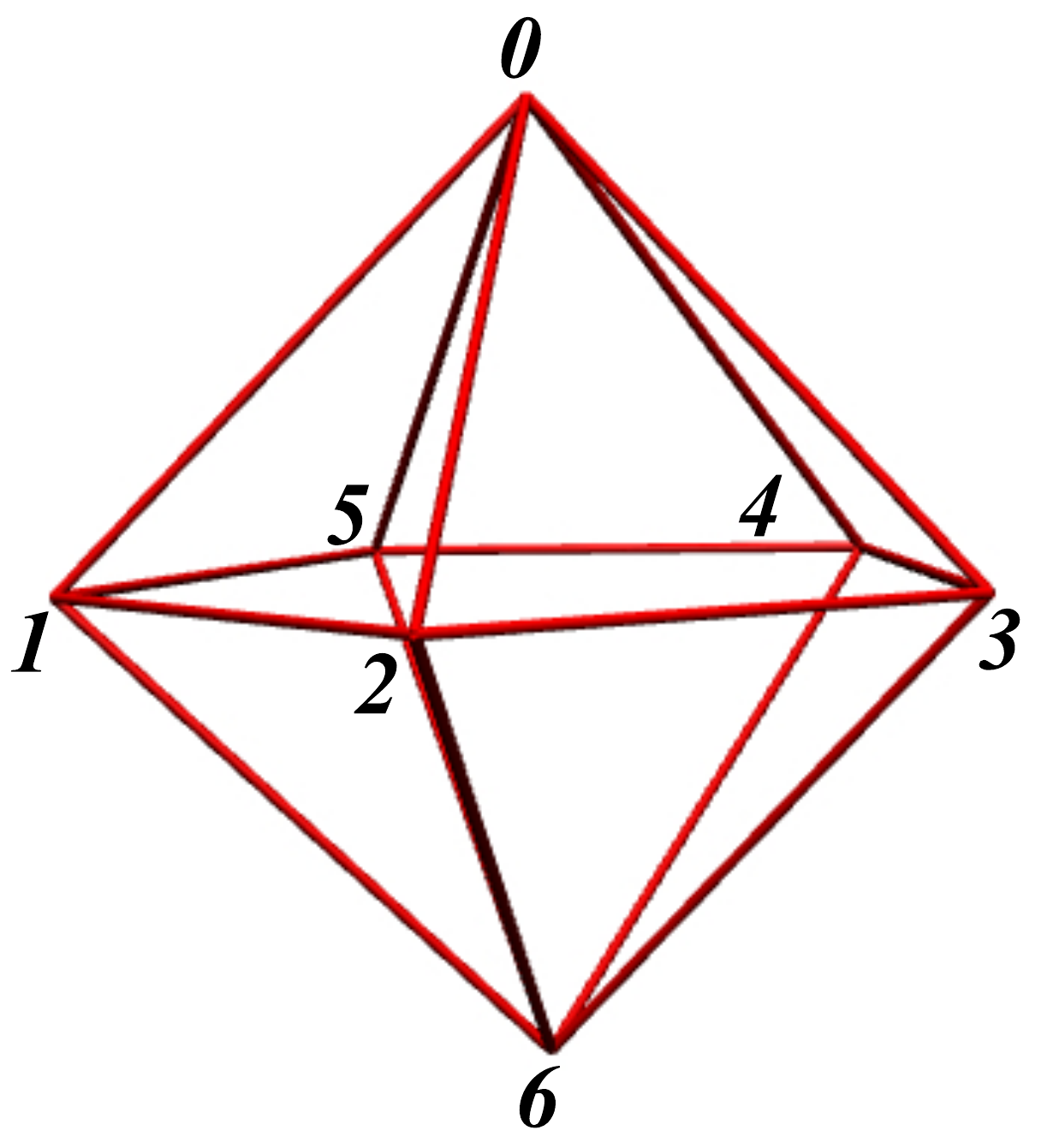} & \hskip1cm & \includegraphics[width=6.2cm]{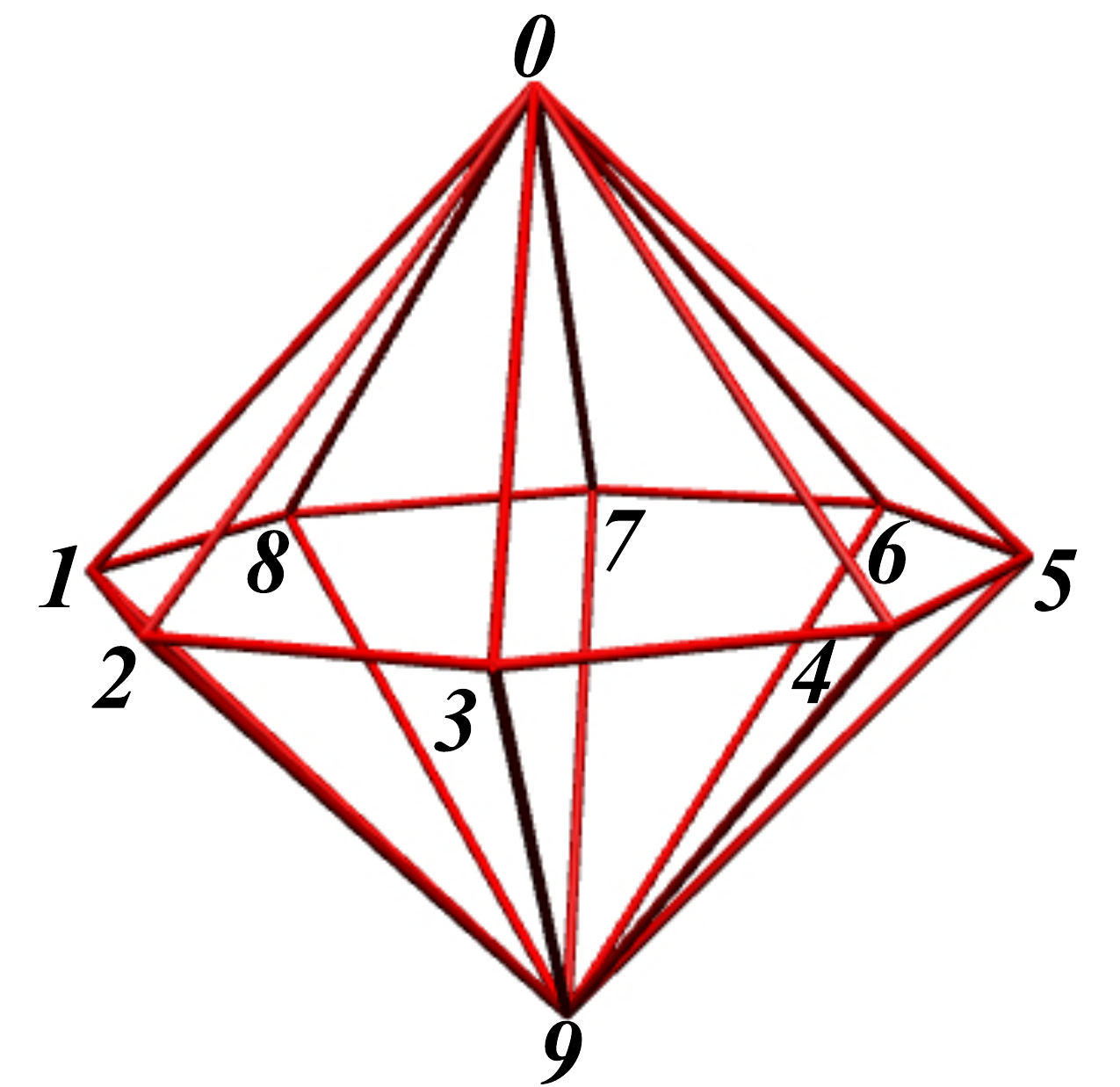}
\end{tabular}
\end{center}
\end{Ex}

Due to their particular construction, the polytopes $B_n$ can be seen also as simplicial complexes and then we can construct their associated Stanley-Reisner ideal.

We recall that for a simplicial complex $\Delta$  with vertices $\{1, \dots, N\}$, we can identify each subset $\sigma \subseteq \{1, \dots, N\}$ with its squarefree vector in $\{0,1\}^N$, which has entry  $1$ in the $i-$th spot when $i\in \sigma$, and $0$ in all other entries. This convention allows us to write $x^\sigma=\Pi_{i\in \sigma}x_i$.

 \begin{Def}\label{DefSR}
The Stanley-Reisner ideal of the simplicial complex $\Delta$ is the squarefree monomial ideal
\[
I_\Delta=<x^\tau\, ; \, \tau\notin \Delta>\subset k[x_1, \dots, x_N],
\]
generated by monomials corresponding to non-faces $\tau$ of $\Delta$.
\end{Def}

There are two ways to present a squarefree monomial ideal: either by its generators or as an intersection of monomial prime ideals. These are generated by subsets of $\{x_1, \dots, x_N\}$. For notation, we write
\[
m^\tau=<x_i\, :\, i\in \tau>
\]
for the monomial prime ideal corresponding to $\tau$. Frequently, $\tau$ will be the complement $\overline{\sigma}=\{1, \dots, N\}\setminus \sigma$ of some simplex $\sigma$.

\begin{Thm}[\cite{Miller}, Theorem 1.7]\label{1.7} The correspondence $\Delta \rightsquigarrow I_\Delta$ constitutes a bijection from simplicial complexes on vertices $\{1, \dots, N\}$ to squarefree monomial ideals inside $A=k[x_1, \dots, x_N]$. Furthermore,
\[
I_\Delta=\bigcap_{\sigma\in \Delta}m^{\overline{\sigma}}.
\]
\end{Thm}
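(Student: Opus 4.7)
The plan is to split the statement into two pieces: the bijection $\Delta \leftrightarrow I_\Delta$ and the explicit prime decomposition. For the bijection I would construct the inverse explicitly. Given a squarefree monomial ideal $I\subset A$, define
\[
\Delta_I \;=\; \{\sigma \subseteq \{1,\dots,N\} : x^\sigma \notin I\}.
\]
The fact that $\Delta_I$ is a simplicial complex uses only that monomial ideals are closed under taking multiples: if $\tau\subseteq \sigma$, then $x^\tau \mid x^\sigma$, so $x^\sigma\notin I$ forces $x^\tau\notin I$. This gives $\tau\in \Delta_I$ whenever $\sigma\in \Delta_I$ and $\tau\subseteq \sigma$, which is exactly the closure-under-subsets axiom.

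To verify that $\Delta\mapsto I_\Delta$ and $I\mapsto \Delta_I$ are mutually inverse, the key input is that a squarefree monomial ideal is generated by its squarefree monomials, so membership of a squarefree monomial $x^\sigma$ in such an ideal reduces to divisibility from some squarefree generator. For $\Delta_{I_\Delta}=\Delta$: if $\sigma\in \Delta$ but $x^\sigma\in I_\Delta$, then some generator $x^\tau$ with $\tau\notin \Delta$ divides $x^\sigma$, which forces $\tau\subseteq \sigma\in \Delta$ and contradicts closure of $\Delta$ under subsets; conversely if $\sigma\notin \Delta$ then $x^\sigma$ is literally one of the defining generators of $I_\Delta$. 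For $I_{\Delta_I}=I$: the squarefree generators of $I_{\Delta_I}$ are the $x^\tau$ with $\tau\notin \Delta_I$, which by definition of $\Delta_I$ are exactly the squarefree monomials in $I$, and these generate $I$ because $I$ is squarefree monomial.

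For the prime-decomposition formula I would argue pointwise on squarefree monomials. For a squarefree $\tau$ and any $\sigma$, membership $x^\tau\in m^{\overline{\sigma}}$ holds iff some variable $x_i$ with $i\in \overline{\sigma}$ divides $x^\tau$, iff $\tau\cap \overline{\sigma}\neq \emptyset$, iff $\tau\not\subseteq \sigma$. Therefore $x^\tau$ lies in $\bigcap_{\sigma\in \Delta} m^{\overline{\sigma}}$ iff $\tau$ is contained in no face of $\Delta$, which (because $\Delta$ is closed under subsets: $\tau\in \Delta$ is equivalent to $\tau$ being a subset of some face) is equivalent to $\tau\notin \Delta$, i.e.\ to $x^\tau\in I_\Delta$. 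Since both sides are squarefree monomial ideals, agreement on squarefree monomials gives equality of the ideals.

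The main obstacle is more bookkeeping than substantive: at every step one has to reduce ideal-theoretic membership to divisibility by generators and exploit squarefreeness to cut down to squarefree generators. Once that reflex is in place, all three verifications collapse to the elementary set-theoretic equivalences displayed above.
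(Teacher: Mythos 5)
The paper does not prove this statement; it is quoted verbatim from Miller--Sturmfels (Theorem 1.7), and your argument is correct and is essentially the standard proof given there: construct the inverse $I\mapsto\Delta_I$, check both composites via divisibility of squarefree monomials, and verify the decomposition by testing squarefree monomials against the equivalence $x^\tau\in m^{\overline{\sigma}}\iff\tau\not\subseteq\sigma$. No gaps.
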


\subsection{Symbolic powers of monomial ideals}

The study of the Waldschmidt constant of an ideal $I$ concerns with the analysis of generators of minimal degree in the  symbolic powers $I^{(m)}$ of $I$. The general definition of the $m-$th symbolic power of $I$ is based on the intersection of the $m-$th ordinary powers of the minimal components in its primary decomposition. In the case of squarefree monomial ideals the situation is simpler.

\begin{Prop}
Suppose $I=Q_1 \cap \cdots \cap Q_k$ is a primary decomposition  of a monomial ideal and $P$ is an associated prime of $I$. Let $Q_{\subseteq P}$ be the intersection of all $Q_i$ with $\sqrt{Q_i} \subseteq P$. For all $m$, $Q^m_{\subseteq P}=R \cap I^mR_P$.
\end{Prop}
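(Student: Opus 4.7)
The plan is to prove the proposition in two stages: first localize at $P$ to reduce $I^m R_P$ to $Q_{\subseteq P}^m R_P$, and then contract back to $R$ using a monomial-support argument.

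For the localization step, if $\sqrt{Q_i}\not\subseteq P$ then there is a variable $x\in\sqrt{Q_i}\setminus P$; some power $x^N$ lies in $Q_i$, and $x^N$ is a unit in $R_P$, so $Q_iR_P=R_P$. Because localization is flat, it commutes with finite intersections, and hence
\[
IR_P=\bigcap_i Q_iR_P=\bigcap_{\sqrt{Q_i}\subseteq P}Q_iR_P=Q_{\subseteq P}R_P.
\]
Raising to the $m$-th power yields $I^mR_P=Q_{\subseteq P}^mR_P$, so the proposition reduces to the identity $R\cap Q_{\subseteq P}^mR_P=Q_{\subseteq P}^m$.

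For the contraction step, the inclusion $\supseteq$ is trivial. For $\subseteq$, I would reduce to checking that every variable $x_j\notin P$ is a non-zero-divisor on $R/Q_{\subseteq P}^m$: once this is known, the union of associated primes of $Q_{\subseteq P}^m$ (all monomial primes, since $Q_{\subseteq P}^m$ is a monomial ideal) is contained in $P$, so every $s\in R\setminus P$ acts injectively on $R/Q_{\subseteq P}^m$, and $sf\in Q_{\subseteq P}^m$ forces $f\in Q_{\subseteq P}^m$. The non-zero-divisor condition in turn follows from the claim that no minimal monomial generator of $Q_{\subseteq P}^m$ involves a variable outside $P$: for if every minimal generator is a monomial in the variables of $P$, then for any $x_j\notin P$ and any monomial $f$ with $x_jf\in Q_{\subseteq P}^m$, a dividing generator $g$ satisfies $\gcd(g,x_j)=1$ and so $g\mid f$, giving $f\in Q_{\subseteq P}^m$. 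To see that the minimal generators of $Q_{\subseteq P}^m$ only use variables in $P$, I would invoke the irreducible decomposition of monomial ideals: each $Q_i$ is an intersection of ideals of the form $(x_{j_1}^{a_1},\dots,x_{j_r}^{a_r})$ with $\{x_{j_1},\dots,x_{j_r}\}\subseteq\sqrt{Q_i}$, hence the minimal generators of $Q_i$ lie in the subring generated by $\sqrt{Q_i}$; when $\sqrt{Q_i}\subseteq P$ the same holds with $P$ in place of $\sqrt{Q_i}$, and this support condition is preserved under intersection and under taking the $m$-th power.

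The main obstacle is the structural statement that each $Q_i$ is supported on the variables of $\sqrt{Q_i}$: this is a feature of monomial primary decomposition (cf.\ \cite{Miller}) rather than of primary ideals in general, and it is what lets one convert the abstract contraction $R\cap Q_{\subseteq P}^mR_P$ into a concrete divisibility statement. Once that input is granted, everything else is routine bookkeeping with colon ideals.
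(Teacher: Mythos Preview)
The paper does not prove this proposition itself; it simply refers the reader to \cite{Cooper}, Proposition~3.6. Your argument is correct and is in fact the standard one: localize at $P$ to discard the components with $\sqrt{Q_i}\not\subseteq P$, then contract using that $Q_{\subseteq P}^m$ is generated by monomials supported on the variables of $P$, so that $R\setminus P$ consists of non-zero-divisors modulo $Q_{\subseteq P}^m$. Two small remarks. In the localization step you do not actually need a \emph{variable} in $\sqrt{Q_i}\setminus P$; any element $s\in\sqrt{Q_i}\setminus P$ has a power in $Q_i$ that becomes a unit in $R_P$, so that half works for arbitrary primary decompositions. You are right to flag that the contraction step, by contrast, uses the monomial structure of the $Q_i$; this is harmless since the paper only ever applies the result to the canonical monomial primary decomposition (Proposition~\ref{decompo}), and in any case the $m=1$ instance already gives $Q_{\subseteq P}=R\cap IR_P$, which is independent of the chosen decomposition and hence always equal to the monomial ideal obtained from the monomial one, so your support argument applies regardless.
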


\begin{proof}
See \cite{Cooper}, Proposition 3.6.
\end{proof}
A consequence of the previous proposition with  $m=1$ is that $Q_{\subseteq P}=R \cap IR_P$, and thus $Q_{\subseteq P}$ does not depend on a choice of primary decomposition. Henceforth, we will use $Q_{\subseteq P}=R \cap IR_P$ as the definition of $Q_{\subseteq P}$ so that we may avoid choosing a primary decomposition.

\begin{Thm} \label{decomp}
The $m-$th symbolic power of a monomial ideal is
$$I^{(m)} = \bigcap_{P \in maxAss(I)} Q^m_{\subseteq P}$$
where $Q_{\subseteq P} = R \cap IR_P$
\end{Thm}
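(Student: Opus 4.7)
The plan is to start from the general definition
\[
I^{(m)} = R \cap \bigcap_{P \in Ass(I)} I^m R_P
\]
and then reduce the intersection from all associated primes to only the maximal associated primes, at which point the previous proposition will give the stated formula.

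First I would pull the intersection over $R$, writing
\[
I^{(m)} = \bigcap_{P \in Ass(I)} \bigl( R \cap I^m R_P \bigr),
\]
and apply the preceding proposition to each factor to rewrite this as
\[
I^{(m)} = \bigcap_{P \in Ass(I)} Q^m_{\subseteq P}.
\]
At this stage the right-hand side already equals the symbolic power, but indexed over all of $Ass(I)$ rather than just $maxAss(I)$, so the work is to eliminate the non-maximal primes as redundant.

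The key step is a monotonicity observation: if $P' \subseteq P$ are two associated primes of $I$, then the collection of primary components $Q_i$ with $\sqrt{Q_i} \subseteq P'$ is a subcollection of those with $\sqrt{Q_i} \subseteq P$. Intersecting over a smaller collection gives a larger ideal, so $Q_{\subseteq P'} \supseteq Q_{\subseteq P}$, and taking $m$-th powers preserves this inclusion:
\[
Q^m_{\subseteq P'} \supseteq Q^m_{\subseteq P}.
\]
Therefore in the big intersection the term $Q^m_{\subseteq P'}$ is already implied by the term $Q^m_{\subseteq P}$ whenever $P' \subseteq P$, and every $P' \in Ass(I)$ is contained in some $P \in maxAss(I)$. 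Dropping these redundant terms yields
\[
I^{(m)} = \bigcap_{P \in maxAss(I)} Q^m_{\subseteq P},
\]
as desired.

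The only part requiring real care is the monotonicity step, and even there the argument is essentially formal once one remembers that $Q_{\subseteq P}$ is by definition (via the previous proposition, applied with $m=1$) independent of the chosen primary decomposition, so comparing two different $Q_{\subseteq P}$ and $Q_{\subseteq P'}$ is meaningful. I do not expect a genuine obstacle here; the content of the theorem is really that the maximal associated primes suffice, and this is exactly the inclusion $Q^m_{\subseteq P'} \supseteq Q^m_{\subseteq P}$ for $P' \subseteq P$.
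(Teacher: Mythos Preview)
The paper does not actually prove this theorem: its entire proof is the sentence ``See \cite{Cooper}, Theorem 3.7.'' So there is nothing to compare your argument against here beyond noting that you supply a self-contained proof where the paper defers to the literature.

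Your argument is correct. Starting from the definition $I^{(m)} = R \cap \bigcap_{P \in Ass(I)} I^m R_P$, pulling $R$ inside the intersection, and invoking the preceding proposition termwise gives $I^{(m)} = \bigcap_{P \in Ass(I)} Q^m_{\subseteq P}$. The reduction to $maxAss(I)$ via the monotonicity $P' \subseteq P \Rightarrow Q_{\subseteq P'} \supseteq Q_{\subseteq P}$ (hence $Q^m_{\subseteq P'} \supseteq Q^m_{\subseteq P}$) is exactly the right idea, and your justification---fewer primary components in the intersection defining $Q_{\subseteq P'}$---is sound. The remark that $Q_{\subseteq P}$ is well-defined independently of the chosen primary decomposition is a good point to make explicit, and the paper itself notes this consequence of the preceding proposition just before stating the theorem.
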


\begin{proof}
See \cite{Cooper}, Theorem 3.7.
\end{proof}

\begin{Def} 
Given a homogeneous ideal $0\neq I\subseteq R=k[\PPP^N]$, we denote by $\alpha(I)$ the minimum degree of a generator of $I$, i.e. $\alpha(I)=min\{t:I_t\neq 0\}$.
\end{Def}

\begin{Def} \label{defW}
Given a homogeneous ideal $0\neq I \not\subseteq k[\PPP^N]$, the  Waldschmidt constant of $I$ is the limit
$$\gamma(I)=\lim_{m \to \infty}\alpha(I^{(m)})/m.$$   
\end{Def}

A proof of the existence of the limit can be found in \cite{HR2}.

\section{Computations on the ideals $I_{B_n}$}

The final aim of this section is to prove Theorem \ref{main}. As soon as we want to compute the Waldschmidt constant, we need to understand the behavior of the $m-$th symbolic powers of  $I_{B_n}$.

\subsection{Description of $I_{B_n}$} As a first step we describe the generators of $I_{B_n}$.
\begin{Prop}\label{generators}
Given $B_n$, the generators of $I_{B_n}$ are the squarefree monomials  $x_ix_j$, where $i$ and $j$ represent two non-adjacent vertices of $B_n$
\end{Prop}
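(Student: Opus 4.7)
The plan is to apply Definition \ref{DefSR} together with Theorem \ref{1.7}, which identify the minimal monomial generators of $I_{B_n}$ with the minimal non-faces of $B_n$ viewed as a simplicial complex. It therefore suffices to describe the face poset of $B_n$ and extract its minimal non-faces.

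First I would record the facets of $B_n$. Since $B_n$ is a simplicial $3$-polytope whose boundary is a simplicial $2$-sphere, every $2$-face is a triangle with one apex and one base edge; explicitly, the facets are the $2n$ triangles $\{0,i,i+1\}$ and $\{n+1,i,i+1\}$ for $i=1,\ldots,n$ (base indices modulo $n$). As a direct consequence, the $1$-skeleton of $B_n$ consists exactly of the base edges $\{i,i+1\}$, the upper-cone edges $\{0,i\}$ and the lower-cone edges $\{n+1,i\}$; this is precisely the edge set of $B_n$ as a polytope, so the $1$-faces of the complex coincide with the adjacent pairs of vertices of $B_n$. All singletons are, of course, faces.

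From this, the characterization of minimal non-faces is immediate in size $2$: a pair $\{i,j\}$ is a non-face precisely when $i$ and $j$ are non-adjacent in $B_n$, and any such pair is automatically a minimal non-face because singletons are faces. It remains to show that no non-face of size $\geq 3$ is minimal. If $\tau$ were a minimal non-face with $|\tau|\geq 3$, every $2$-subset of $\tau$ would be an edge of $B_n$; but inspecting the $1$-skeleton shows that the only pairwise-adjacent triples are the facet triangles listed above, whence $\tau$ would be a face, a contradiction. Combining these steps with Definition \ref{DefSR} yields
\[
I_{B_n}=\langle\, x_ix_j \,\mid\, \{i,j\}\text{ is a non-edge of }B_n\,\rangle,
\]
which is the claim.

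The main obstacle is the last step, namely verifying that every triangle in the $1$-skeleton of $B_n$ is a facet. This requires a short case analysis: three base vertices cannot be pairwise adjacent (the $n$-gon $Q_n$ contains no triangle for $n\geq 4$); a triple containing both apices fails immediately because $\{0,n+1\}$ is not an edge; and a triple containing exactly one apex with two pairwise-adjacent base vertices is precisely one of the upper or lower facets. (Note that the argument implicitly requires $n\geq 4$; for $n=3$ the set $\{1,2,3\}$ would be an additional minimal non-face, which is consistent with the rest of the paper focusing on bipyramids over genuine $n$-gons with $n\geq 4$.)
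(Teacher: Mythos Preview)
Your argument is correct and follows the same approach as the paper, which simply states that the result ``follows directly from Definition \ref{DefSR}.'' Your version spells out the identification of the minimal non-faces of $B_n$ in detail, including the case analysis ruling out minimal non-faces of size $\geq 3$ and the caveat for $n=3$; this is more than the paper offers but entirely in the same spirit.
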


\begin{proof}
It follows directly from Definition \ref{DefSR}.
\end{proof}

Let $\TT_{n-2,Q_n}$ be the set of sub-trees in the $n-$agon $Q_n$, with $n-2$ vertices.
Given $S\in \TT_{n-2,Q_n}$, we define the following ideals
\[
I_{[0,S]}=\langle x_0\rangle+\langle x_i \, : \, i \in V(S)\rangle, \qquad I_{[n+1,S]}=\langle x_{n+1}\rangle+\langle x_i \, : \, i \in V(S)\rangle.
\]

\begin{Prop}\label{decompo}
The primary decomposition of $I_{B_n}$  is
\begin{equation}\label{pridec1}
I_{B_n}=\bigcap_{S\in \TT_{n-2,Q_n}}\left(I_{[0,S]}\cap I_{[n+1,S]} \right).
\end{equation}
In particular, the $m-$th symbolic powers of i $I_{B_n}$ is
\begin{equation}\label{pridec2}
I_{B_n}^{(m)}=\bigcap_{S\in \TT_{n-2,Q_n}}\left(I_{[0,S]}^m\cap I_{[n+1,S]}^m \right).
\end{equation}

\end{Prop}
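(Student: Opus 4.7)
The plan is to apply Theorem \ref{1.7} directly to $B_n$ viewed as an abstract simplicial complex whose simplices are the faces of the boundary of the polytope. I first identify the facets: these are the $2n$ triangular $2$-faces of the bipyramid, namely $\{0,i,i+1\}$ and $\{n+1,i,i+1\}$ for each edge $\{i,i+1\}$ of $Q_n$ (indices modulo $n$). No larger face of $B_n$ exists, because $\{0,n+1\}$ is not an edge of $B_n$ (the two apex vertices are non-adjacent).

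Next I would match the subtree indexing with the facet indexing. Since $Q_n$ is a cycle, a connected acyclic subgraph on $n-2$ vertices is forced to be a path on $n-2$ consecutive vertices, so $\TT_{n-2,Q_n}$ is in natural bijection with the edges of $Q_n$: the subtree $S$ corresponding to $\{i,i+1\}$ has $V(S)=\{1,\dots,n\}\setminus\{i,i+1\}$. Under this bijection,
\[
I_{[0,S]}=\langle x_0\rangle+\langle x_j:j\in V(S)\rangle=m^{\overline{\{n+1,i,i+1\}}},\qquad I_{[n+1,S]}=m^{\overline{\{0,i,i+1\}}}.
\]
Plugging these into Theorem \ref{1.7}, and noting that the intersection may be restricted to facets since $m^{\overline{\sigma}}\supseteq m^{\overline{F}}$ whenever $\sigma\subseteq F$, immediately yields (\ref{pridec1}).

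For the symbolic power formula (\ref{pridec2}) I invoke Theorem \ref{decomp}. Because $I_{B_n}$ is a squarefree monomial ideal it is radical, so its associated primes coincide with its minimal primes, which are precisely the $m^{\overline{F}}$ described above. Distinct facets produce pairwise incomparable variable sets (one facet never contains another), hence every associated prime is already maximal in $\mathrm{Ass}(I_{B_n})$ and $Q_{\subseteq P}=P$ for each such $P$. Theorem \ref{decomp} then collapses to $I_{B_n}^{(m)}=\bigcap_P P^m$, and re-indexing by the same bijection $S\leftrightarrow\{i,i+1\}$ delivers (\ref{pridec2}).

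The argument is essentially bookkeeping and presents no real obstacle. The one subtlety worth isolating is the bijection between $\TT_{n-2,Q_n}$ and the edges of $Q_n$: a subtree on $n-2$ vertices of a cycle must be obtained by removing two \emph{adjacent} vertices, because removing any non-adjacent pair disconnects the cycle and therefore cannot yield a tree.
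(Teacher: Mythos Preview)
Your argument is correct and follows the same approach as the paper, which simply invokes Theorem~\ref{1.7} for (\ref{pridec1}) and Theorem~\ref{decomp} together with $\sqrt{I_{[0,S]}}=I_{[0,S]}$, $\sqrt{I_{[n+1,S]}}=I_{[n+1,S]}$, and $\mathrm{maxAss}(I)=\{I_{[0,S]},I_{[n+1,S]}:S\in\TT_{n-2,Q_n}\}$ for (\ref{pridec2}). You have merely unpacked the bookkeeping the paper leaves implicit---in particular the bijection between $\TT_{n-2,Q_n}$ and the edges of $Q_n$, and the identification of $I_{[0,S]},I_{[n+1,S]}$ with the facet-complement primes---which is exactly what is needed to see why those two theorems apply.
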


\begin{proof}
The decomposition (\ref{pridec1}) follows directly by Theorem \ref{1.7}. The decomposition (\ref{pridec2}) of $I_{B_n}^{(m)}$ follows from Theorem \ref{decomp} since $\sqrt{I_{[0,S]}}=I_{[0,S]}$, $\sqrt{I_{[n+1,S]}}=I_{[n+1,S]}$ for all $S\in \TT_{n-2,Q_n}$ and $maxAss(I)=\{I_{[0,S]},I_{[n+1,S]}\, : \,S\in \TT_{n-2,Q_n} \}$.
\end{proof}

Thus, by the previous proposition, there is a 2:1 correspondence between primary components of the ideal  $I_{B_n}$ and the  $(n-2)-$trees on the base $Q_n$ of $B_n$.

\begin{Ex}\rm
Consider the tree $S$, with vertices $\{3,4,5,6,7,8\}$, in $\TT_{6,Q_8}$, where $Q_8$ is the base of $B_{8}$.
Then the primary components associated to $S$ are
\[
I_{[0,S]}=\langle x_0, x_3,x_4,x_5,x_6,x_7,x_8\rangle \qquad 
I_{[9,S]}=\langle x_3,x_4,x_5,x_6,x_7,x_8,x_9\rangle.
\]
\end{Ex}

\begin{Def}\label{pesi}
Let  $f$ be the monomial $x_0^{a_0}x_1^{a_1}\cdots x_{n+1}^{a_{n+1}}$ and consider a tree $S$ in $\TT_{n-2,Q_n}$. 
We define the weights  $w_{0,S}(f)$, $w_{n+1,S}(f)$ of $f$ with respect to  $S$ and respectively to $x_0$ and $x_{n+1}$ as
\[
w_{0,S}(f)=\sum_{i\in S}a_i+a_0
\]
and
\[
w_{n+1,S}(f)=\sum_{i\in S}a_i+a_{n+1}
\]
\end{Def}

\begin{Ex}\rm
Consider $B_9$ and let $f$ be the monomial $x_0^3x_3^2x_5x_7^2x_{10}$. 
Let  $S$ and $S'$ the subtrees in $\TT_{7,Q_9}$ of vertices respectively $\{1,2,3,4,5,6,7\}$ and $\{1,4,5,6,7,8,9 \}$. Then
\[
\begin{array}{ll}
 w_{0,S}(f)=8& w_{10,S}(f)=6 \\
 w_{0,S'}(f)=6& w_{10,S'}(f)=4. 
\end{array}
\]
\end{Ex}
Given a monomial $f$, we can use the weights previously defined to determine if $f\in I_{B_n}^{(m)}$.
\begin{Prop}\label{carat2}
A monomial $f\in k[x_0, \dots, x_{n+1}]$ is in $I_{B_n}^{(m)}$ if and only if
\[
\min\{w_{0,S}(f), w_{n+1,S}(f)\}\geq m, \quad \forall S\in \TT_{n-2,Q_n}.
\]
\end{Prop}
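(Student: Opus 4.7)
The plan is to reduce everything to Proposition \ref{decompo} together with a standard fact about powers of monomial prime ideals.

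First I would note that by Proposition \ref{decompo}, membership $f \in I_{B_n}^{(m)}$ is equivalent to $f \in I_{[0,S]}^m \cap I_{[n+1,S]}^m$ for every $S \in \TT_{n-2, Q_n}$. So it suffices to translate each condition $f \in I_{[0,S]}^m$ (and similarly for $[n+1,S]$) into the weight condition $w_{0,S}(f) \geq m$.

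Next I would use the following elementary observation: if $P = \langle x_{i_1}, \dots, x_{i_k}\rangle$ is a monomial prime ideal and $f = x_0^{a_0} \cdots x_{n+1}^{a_{n+1}}$ is a monomial, then $f \in P^m$ if and only if $a_{i_1} + \cdots + a_{i_k} \geq m$. The ``if'' direction is immediate, since $f$ is then a multiple of some product of $m$ generators $x_{i_{j_1}} x_{i_{j_2}} \cdots x_{i_{j_m}}$ (with repetitions) chosen according to the exponents. The ``only if'' direction follows because every generator of $P^m$ is a product of $m$ variables from $\{x_{i_1}, \dots, x_{i_k}\}$, so any monomial multiple has total exponent on these variables at least $m$. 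Applying this with $P = I_{[0,S]} = \langle x_0\rangle + \langle x_i : i \in V(S)\rangle$ gives $f \in I_{[0,S]}^m \iff a_0 + \sum_{i \in V(S)} a_i \geq m \iff w_{0,S}(f) \geq m$, and similarly for $I_{[n+1,S]}$.

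Combining these equivalences, $f \in I_{[0,S]}^m \cap I_{[n+1,S]}^m$ holds exactly when $\min\{w_{0,S}(f), w_{n+1,S}(f)\} \geq m$, and intersecting over all $S$ finishes the proof.

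The main (mild) obstacle is the monomial-prime-power fact; everything else is just unwinding the definitions in Proposition \ref{decompo} and Definition \ref{pesi}. Since the fact is standard and short, I do not expect serious difficulty, but it is the only non-formal step.
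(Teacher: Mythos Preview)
Your proposal is correct and follows essentially the same route as the paper: reduce via Proposition~\ref{decompo} to membership in each $I_{[0,S]}^m\cap I_{[n+1,S]}^m$, then translate membership of a monomial in the $m$-th power of a monomial prime into the exponent-sum (weight) condition. The paper's proof is terser, simply asserting the divisibility/weight equivalence that you spell out explicitly.
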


\begin{proof}

By Theorem \ref{decompo}, $f\in I^{(m)}$ if and only if $f\in I_{[0,S]}^m\cap I_{[n+1,S]}^m$,  $\forall S\in \TT_{n-2,Q_n}$. 
This happens if and only if $f$ is divisible for a monomial in $I_{[0,S]}^m\cap I_{[n+1,S]}^m$,  $\forall S\in \TT_{n-2,Q_n}$, which is equivalent to ask that $\min\{w_{0,S}(f), w_{n+1,S}(f)\}\geq m$, for all $S\in \TT_{n-2,Q_n}$.
\end{proof}

\begin{Rmk}\label{rmksomma}\rm Given two monomials $f,g\in k[x_0,\dots, x_{n+1}]$ with
\[
\min\{w_{0,S}(f), w_{n+1,S}(f)\}\geq m_1, \quad \forall S\in \TT_{n-2,Q_n}.
\]
and
\[
\min\{w_{0,S}(g), w_{n+1,S}(g)\}\geq m_2, \quad \forall S\in \TT_{n-2,Q_n}.
\]
it is easy to verify that
\[
\min\{w_{0,S}(gg), w_{n+1,S}(fg)\}\geq m_1+m_2, \quad \forall S\in \TT_{n-2,Q_n}.
\]
\end{Rmk}

We give now two lemmas which describes some behavior of the exponents of the variables in a monomial $f$, with respect to the condition  $f\in I^{(m)}_{B_n}$.

\begin{Lem}\label{x0xn+1}
Let $f=x_0^{a_0}x_1^{a_1}\cdots x_{n+1}^{a_{n+1}}$, if  $f\in I^{(m)}_{B_n}$, then $g=x_0^tx_1^{a_1}\cdots x_{n+1}^t\in  I^{(m)}_{B_n}$, where $t=\min\{a_0,a_{n+1}\}$.
\end{Lem}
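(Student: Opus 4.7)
The plan is to use the characterization of membership in $I_{B_n}^{(m)}$ via the weight inequalities from Proposition \ref{carat2}, and to exploit the fact that the weights $w_{0,S}$ and $w_{n+1,S}$ differ only in whether we add $a_0$ or $a_{n+1}$ to the common sum $\sum_{i\in S}a_i$.

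First I would assume without loss of generality that $t=a_0\le a_{n+1}$; the other case is completely symmetric (swap the roles of $x_0$ and $x_{n+1}$). Under this assumption, $g$ is obtained from $f$ by lowering only the exponent of $x_{n+1}$, from $a_{n+1}$ down to $a_0$, while leaving every other exponent unchanged.

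Next, for an arbitrary $S\in\TT_{n-2,Q_n}$, I would compute the two weights of $g$ directly from Definition \ref{pesi}:
\[
w_{0,S}(g)=\sum_{i\in S}a_i+a_0=w_{0,S}(f),\qquad w_{n+1,S}(g)=\sum_{i\in S}a_i+a_0=w_{0,S}(f).
\]
Both weights of $g$ thus equal $w_{0,S}(f)$. Since $f\in I_{B_n}^{(m)}$, Proposition \ref{carat2} gives $w_{0,S}(f)\ge m$ for every $S$, so $\min\{w_{0,S}(g),w_{n+1,S}(g)\}\ge m$ for every $S\in\TT_{n-2,Q_n}$. Applying Proposition \ref{carat2} in the other direction yields $g\in I_{B_n}^{(m)}$.

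There is really no obstacle here: the lemma is essentially a bookkeeping observation that the smaller of the two apex exponents is already enough to make every $w_{n+1,S}$-constraint follow from the corresponding $w_{0,S}$-constraint (and vice versa). The only thing to be careful about is the WLOG reduction, which is legitimate because the ideal $I_{B_n}$, its primary decomposition in Proposition \ref{decompo}, and the weight definitions are all symmetric under the interchange $x_0\leftrightarrow x_{n+1}$.
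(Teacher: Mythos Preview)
Your argument is correct and follows exactly the same route as the paper: assume without loss of generality that $t=a_0$, observe that both weights of $g$ coincide with $w_{0,S}(f)$, and conclude via Proposition~\ref{carat2}. The only step you leave implicit is that $a_0\le a_{n+1}$ forces $w_{0,S}(f)=\min\{w_{0,S}(f),w_{n+1,S}(f)\}\ge m$, which the paper spells out but which is immediate from your first paragraph.
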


\begin{proof}
Given $f=x_0^{a_0}x_1^{a_1}\cdots x_{n+1}^{a_{n+1}}$, we can suppose that $a_0=t=\min\{a_0,a_{n+1}\}$. Then
\[
w_{0,S}(f)=\sum_{x_i\in S}a_i+t\leq \sum_{x_i\in S}a_i+a_{n+1}= w_{n+1,S}(f)
\]
Since  $f\in I^{(m)}_{B_n}$, by Proposition \ref{carat2}, one has
\[
w_{0,S}(f)=\min\{w_{0,S}(f), w_{n+1,S}(f)\}\geq m, \quad \forall S\in \TT_{n-2,Q_n}
\]
Consider $g=x_0^tx_1^{a_1}\cdots x_{n+1}^t$, then
\[
w_{0,S}(g)=w_{n+1,S}(g)=w_{0,S}(f), \quad \forall S\in \TT_{n-2,Q_n}
\]
hence, again by Proposition \ref{carat2},  $g\in I^{(m)}_n$.
\end{proof}

\begin{Lem}\label{x1xn}
Let $f=x_0^{a_0}x_1^{a_1}\cdots x_{n+1}^{a_{n+1}}$ and define $g=x_0^{b_0}x_1^{b_1}\cdots x_{n+1}^{b_{n+1}}$ where $b_i=\max\{a_i-1,0\}$ for $i=1,\dots,n$ If  $f\in I^{(m)}_{B_n}$, then $g=x \in  I^{(m-(n-2))}_{B_n}$. Moreover $\deg(g)\geq \deg(f)-n$ and equality holds if $a_i\not=0$ for $i=1,\dots, n$.
\end{Lem}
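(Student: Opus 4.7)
The plan is to reduce everything to weight inequalities via Proposition \ref{carat2} and then do a direct bookkeeping on the exponent vector. By hypothesis and Proposition \ref{carat2}, for every $S\in\TT_{n-2,Q_n}$ we have $w_{0,S}(f)\geq m$ and $w_{n+1,S}(f)\geq m$. I want to show the analogous inequalities with $m$ replaced by $m-(n-2)$ for the weights of $g$, so that Proposition \ref{carat2} gives $g\in I^{(m-(n-2))}_{B_n}$ directly.

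The first step is the pointwise comparison $a_i-b_i\in\{0,1\}$ for $i=1,\dots,n$: indeed $b_i=\max\{a_i-1,0\}$ equals $a_i-1$ when $a_i\geq 1$ and equals $0=a_i$ when $a_i=0$. Since also $b_0=a_0$ and $b_{n+1}=a_{n+1}$ (the variables $x_0$ and $x_{n+1}$ are untouched by the construction of $g$), for any subtree $S$ with $|V(S)|=n-2$ I would write
\begin{align*}
w_{0,S}(f)-w_{0,S}(g) &= \sum_{i\in V(S)}(a_i-b_i) \;=\; \bigl|\{i\in V(S):a_i\neq 0\}\bigr| \;\leq\; n-2,
\end{align*}
and identically for $w_{n+1,S}$. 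Combining with $\min\{w_{0,S}(f),w_{n+1,S}(f)\}\geq m$ yields
\[
\min\{w_{0,S}(g),w_{n+1,S}(g)\}\geq m-(n-2)\qquad\forall S\in\TT_{n-2,Q_n},
\]
so Proposition \ref{carat2} concludes $g\in I^{(m-(n-2))}_{B_n}$.

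For the degree statement, since $b_0=a_0$, $b_{n+1}=a_{n+1}$, and $a_i-b_i=\min\{a_i,1\}$ for $i=1,\dots,n$, I would just compute
\[
\deg(f)-\deg(g)=\sum_{i=1}^{n}\min\{a_i,1\}=\bigl|\{i\in\{1,\dots,n\}:a_i\neq 0\}\bigr|,
\]
which is at most $n$, with equality precisely when every $a_i$ with $1\leq i\leq n$ is nonzero.

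There is no real obstacle: the whole argument is a two-line comparison of weights, and the matching between the ``loss'' $n-2$ in the symbolic exponent and the number of vertices of a subtree in $\TT_{n-2,Q_n}$ is exactly what makes the bound tight. The only thing to be careful about is that the operation $a_i\mapsto b_i$ affects only the base variables $x_1,\dots,x_n$, so that $x_0$ and $x_{n+1}$ contribute the same summand to both $w_{0,S}$ and $w_{n+1,S}$ before and after, which is what lets the weight estimate go through uniformly on both primary components associated to each $S$.
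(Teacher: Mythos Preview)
Your proof is correct and follows essentially the same approach as the paper: both arguments observe that each subtree $S$ has $n-2$ vertices and that each exponent $a_i$ drops by at most $1$ when passing from $f$ to $g$, so the weight loses at most $n-2$. Your version is simply more explicit, handling $w_{0,S}$ and $w_{n+1,S}$ separately and spelling out the degree count, whereas the paper's proof is a terse two-line sketch.
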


\begin{proof}
Since every tree in $\TT_{n-2,Q_n}$ involves  $n-2$ variables one has
\[
w_S(g) \geq w_S(f)-n-2 \geq m-n-2
\]
and the first statement follows.
For the second statement, let $t=\sharp\{a_i=0, i=1,\dots, n\}$, then $\deg(g)\geq \deg(f)-n+t$.
\end{proof}

\begin{Rmk}\label{impo}\rm Thank to Lemma \ref{x0xn+1}, we can restrict to study the monomials $f=x_0^{a_0}x_1^{a_1}\cdots x_{n+1}^{a_{n+1}}$ where $a_0=a_{n+1}$. For such monomial one has $w_{0,S}(f)=w_{n+1,S}(f)$ that will be simply denoted by $w_S(f)$. In particular, rephrasing  Proposition \ref{carat2}, $f\in I_{B_n}^{(m)}$ if and only if $w_S(f)\geq m$, for all $S\in \TT_{n-2,Q_n}$.
\end{Rmk}

\subsection{Generators of minimal degree in $I_{B_n}^{(m)}$}

For the rest of the paper we denote by $\alpha_{n,m}$ the minimal degree of a generator in the  $m-$th symbolic power of  $I_{B_n}$, that is $\alpha_{n,m}=\alpha(I_{B_n}^{(m)})$. By Proposition \ref{generators}, one has $\alpha_{n,1}=2$ for all $n$.

We first fix our attention in the case of a base with an odd number of vertices, i.e $n=2k-1$. We suppose $k\geq 3$ since for $k=2$, $I_{B_3}$ is a complete intersection ideal and $\gamma(I_{B_3})=2$.

\begin{Prop}\label{casodispari} One has $\alpha_{2k-1,s(2k-3)}=s(2k-1)$, for $s\in \NN$.
\end{Prop}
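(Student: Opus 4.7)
The plan is to establish $\alpha_{2k-1,s(2k-3)}\le s(2k-1)$ and $\alpha_{2k-1,s(2k-3)}\ge s(2k-1)$ separately. Both directions rest on the explicit description of $\TT_{2k-3,Q_{2k-1}}$: since every sub-tree of a cycle is a path, and a path on $2k-3$ vertices of the $(2k-1)$-cycle $Q_{2k-1}$ is obtained exactly by removing two cyclically consecutive vertices, $\TT_{2k-3,Q_{2k-1}}=\{S_1,\dots,S_{2k-1}\}$, where $S_j$ omits the consecutive pair $\{j,j+1\}$ (indices mod $2k-1$).

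The upper bound is witnessed by the monomial $f=(x_1x_2\cdots x_{2k-1})^s$ of degree $s(2k-1)$. Here $a_0=a_{2k}=0$ and $a_i=s$ for $1\le i\le 2k-1$, so for every $S_j$ one has $w_{S_j}(f)=(2k-3)s$, and Proposition \ref{carat2} together with Remark \ref{impo} places $f$ in $I_{B_{2k-1}}^{(s(2k-3))}$.

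For the lower bound, let $f=x_0^{a_0}x_1^{a_1}\cdots x_{2k}^{a_{2k}}$ be a minimal-degree monomial of $I_{B_{2k-1}}^{(s(2k-3))}$. Lemma \ref{x0xn+1} allows us to assume $a_0=a_{2k}$, so writing $T=\sum_{i=1}^{2k-1}a_i$ we have $\deg f=2a_0+T$. Applied to $S_j$, the constraint of Proposition \ref{carat2} (via Remark \ref{impo}) becomes
\[
a_0+T-(a_j+a_{j+1})\ \ge\ s(2k-3)\qquad(j=1,\dots,2k-1).
\]
Summing these $2k-1$ inequalities and using that each $a_i$ appears in exactly two consecutive pairs yields $(2k-1)a_0+(2k-3)T\ge(2k-1)(2k-3)s$, and hence
\[
(2k-3)\deg f\ =\ 2(2k-3)a_0+(2k-3)T\ \ge\ (2k-5)a_0+(2k-1)(2k-3)s,
\]
so $\deg f\ge s(2k-1)+\tfrac{2k-5}{2k-3}\,a_0\ge s(2k-1)$ since $k\ge 3$.

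The key step is the cyclic averaging that collapses the $2k-1$ path-wise inequalities from Proposition \ref{carat2} into a single scalar bound on $\deg f$; its equality case ($a_0=0$ and all $a_i=s$) is precisely realised by the witness monomial used above, giving a reassuring consistency. The only other non-computational ingredient is the reduction to $a_0=a_{n+1}$, which is already supplied by Lemma \ref{x0xn+1}, so no further technical work is anticipated.
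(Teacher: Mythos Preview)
Your argument is correct, and it is substantially different from the paper's. The paper proves the lower bound by induction on $s$: the base case $s=1$ is handled by a direct case analysis, and the inductive step proceeds by a somewhat intricate case split according to whether $a_0=a_{2k}>0$ and whether some $a_i$ vanishes, in each case using either Lemma~\ref{x1xn} directly or first ``symmetrizing'' via the cyclic product $\prod_{t=1}^{2k-1} g_{\sigma^t}$ and then repeatedly applying Lemma~\ref{x1xn} to reach a contradiction with the inductive hypothesis. Your approach bypasses all of this: having identified $\TT_{2k-3,Q_{2k-1}}$ explicitly as the $2k-1$ paths obtained by deleting a consecutive pair, you simply sum the $2k-1$ weight inequalities, which is a clean linear-programming duality bound. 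This is more elementary (no induction, no Lemma~\ref{x1xn}, no symmetrization), makes the equality case transparent, and in fact yields the slightly stronger inequality $\deg f\ge s(2k-1)+\tfrac{2k-5}{2k-3}a_0$ showing that any minimizer must have $a_0=a_{2k}=0$. The only caveat is that your final step uses $2k-5\ge 0$, i.e.\ $k\ge 3$; this matches the paper's standing assumption (stated just before the Proposition), but it would be good to make that dependence explicit in your write-up.
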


\begin{proof}
We prove the statement by induction on $s$.
As a step zero we consider $s=1$ and let $f=x_1\cdots x_{2k-1}$ be the product of the variables associated to all vertices of $Q_{2k-1}$. Clearly $w_S(f)=2k-3$ for all $S\in  \TT_{2k-3,Q_{2k-1}}$. Let us show that there are not monomial $g$ of degree $<2k-1$ in $I_{B_{2k-1}}^{(2k-3)}$. Consider a such monomial $g=x_0^{a_0}x_1^{a_1}\cdots x_{2k+2}^{a_{2k+2}}$ and suppose, by Lemma \ref{x0xn+1} that $a_0=a_{2k+2}=d> 0$. Then $\sum_{i_1}^{2k-1}a_i\leq 2k-2-2d$ and 
\[
w_S(g)=\sum_{i\in S}a_i+a_0 < 2k-2-2d+d=2k-2-d\leq 2k-3
\]
for at least one $S\in  \TT_{2k-3,Q_{2k-1}}$ hence $h\notin I_{B_{2k-1}}^{(2k-3)}$.
Thus, we can reduce to consider a monomial  $g$, in the variables $x_1,\dots, x_{2k-1}$,  with $\deg(h)=2k-2$.
We start with the case of $g$ consisting of $2k-2$ variables. Then we can choose a tree $S$ avoiding the vertex associated to one of these variables obtaining  $w_{S}(g)< 2k-4$ and so  $h\notin I_{2k-1}^{(2k-3)}$.
If $g$ has, at least, one variable $x_i$ of degree  $a_i>1$, then we choose the tree $S$ avoiding $\{i\}$ and we obtain $w_{S}(g)\leq 2k-2-a_1< 2k-3$ and hence $g\notin I_{2k-1}^{(2k-3)}$. 

Suppose now the statement is true for $s-1$ and we prove it for $s$.
Clearly $(x_1\cdots x_{2k-1})^s\in I_{B_{2k-1}}^{(s(2k-3))}$ and $\deg(f^s)=s(2k-1)$. Thus it is enough to prove that,  in $I_{B_{2k-1}}^{(s(2k-3))}$, there are not monomials of degree $s(2k-1)-1$. Suppose there exists such $h=x_0^{a_0}x_1^{a_1}\cdots x_{2k+2}^{a_{2k+2}}\in I_{B_{2k-1}}^{(s(2k-3))}$ with $\deg(h)=s(2k-1)-1$.  Suppose first $a_0=a_{2k}=d> 0$. Hence $\sum_{i=1}^{2k-1}a_i=s(2k-1)-1-2d<s(2k-1)-3$. We distinguish two cases.
\begin{itemize}
\item[(i)] If $a_1\not=0$ for all $i=1, \dots, n$ we can apply Lemma \ref{x1xn} obtaining a monomial $\tilde{g}$ of degree $s(2k-1)-1-(2k-1)=(s-1)(2k-1)-1$ with 
\[
w_S(\tilde{g})\geq s(2k-3)-(2k-3)=(s-1)(2k-3).
\]
Hence $\tilde{g}\in I_{B_{2k-1}}^{((s-1)(2k-3))}$ and this contradicts the inductive hypothesis since $\deg(\tilde{g})<(s-1)(2k-1)$.
\item[(ii)] Suppose now there exists at least one index $i\in \{1, \dots, 2k-1\}$ such that $a_i=0$. Let $\sigma\in S_{2k-1}$ be the permutation sending $j$ to $j+1$ and define $g_\sigma$ the monomial obtained by applying $\sigma$ to the variables in $h$. Then define the monomial
$h=\Pi_{t=1}^{2k-1}g_{\sigma^t}$. One has $\deg(h)=(2k-1)[s(2k-1)-1]$ and, by Remark \ref{rmksomma}, $w_S(h)\geq (2k-1)s(2k-3)$. Moreover, the exponent of each variable associated to a vertex of  $Q_{2k-1}$ has degree $s(2k-1)-2d-1$.  We apply $s(2k-1)-2d-1$ times Lemma \ref{x1xn}  to the monomial $h$ obtaining a monomial $\tilde{h}$ such that
\[
\tilde{h}=(x_0x_{2k})^{d(2k-1)}
\]
and
\[
w_S(\tilde{h})\geq (2k-1)s(2k-3)-(s(2k-1)-2d-1)(2k-3)=4dk-6d+2k-3 \quad \forall S\in \TT_{2k-3,Q_{2k-1}}
\]
which is a contradiction since, for $d>0$ and $k>3$ one has $w_S(\tilde{h})=d(2k-1)<4dk-6d+2k-3$.
\end{itemize}
Thus we can consider $a_0=a_{2k}=0$.  We distinguish two cases.
\begin{itemize}
\item[1)] $a_i\not=0$ for all $i=1,\dots, 2k-1$. In such case we can apply Lemma \ref{x1xn} to $g$ obtaining a new monomial  $\tilde{g}$ of degree $\leq (s-1)(2k-1)-1$ such that $w_S(\tilde{g})\geq s(2k-3)-k-3=(s-1)(2k-3)$. Thus, by Proposition \ref{impo}, $\tilde{g}\in  I_{B_{2k-1}}^{((s-1)(2k-3))}$ contradicting the inductive hypothesis.
\item[2)] There exists at least one index $i\in \{1, \dots, 2k-1\}$ such that $a_i=0$. Then define, as before, the monomial
$h=\Pi_{t=1}^{2k-1}g_{\sigma^t}$. One has $\deg(h)=(2k-1)[s(2k-1)-1]$ and $w_S(h)\geq (2k-1)s(2k-3)$. Moreover, the exponent of each variable associated to a vertex of  $Q_{2k-1}$ has degree $(s(2k-1)-1)$. Since for $s\geq 2$ one has $s(2k-1)-1>2ks-2s+1$ we apply $2sk-2s+1$ times Lemma \ref{x1xn}  to the monomial $h$ obtaining a monomial $\tilde{h}$ such that
\[
\deg(\tilde{h})=s(2k-1)-4k+1=(s-1)(2k-1)-2k+1
\]
and
\[
w_S(\tilde{h})\geq (2k-1)s(2k-3)-(2sk-2s+1)(2k-3)=(2k-3)(s-1) \quad \forall S\in \TT_{2k-3,Q_{2k-1}}
\]
But $\deg(\tilde{h})<(s-1)(2k-1)$ contradicts the inductive hypothesis.
\end{itemize}
\end{proof}

We focus now in the case of a base with an even number of vertices, i.e $n=2k$.

\begin{Prop}\label{casopari} One has $\alpha_{2k,s(k-1)}=sk$ for $s \in \NN$.
\end{Prop}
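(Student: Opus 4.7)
The plan is to prove both inequalities directly, exploiting the cyclic symmetry of the $2k$ maximal subtrees in $\TT_{2k-2,Q_{2k}}$, rather than the induction-on-$s$ strategy used in Proposition~\ref{casodispari}.

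For the upper bound $\alpha_{2k,s(k-1)}\le sk$, I would exhibit $f=(x_2x_4\cdots x_{2k})^s$, which has degree $sk$. Each $S\in\TT_{2k-2,Q_{2k}}$ is a path of $2k-2$ consecutive vertices of $Q_{2k}$, hence the complement of some adjacent pair $\{i,i+1\}$ (indices mod $2k$) on the cycle. Since exactly one of any two adjacent vertices of $Q_{2k}$ is even, $S$ contains exactly $k-1$ of the $k$ even-indexed vertices, so $w_S(f)=s(k-1)$ for every $S$. Because $f$ has $a_0=a_{2k+1}=0$, Proposition~\ref{carat2} together with Remark~\ref{impo} gives $f\in I_{B_{2k}}^{(s(k-1))}$.

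For the lower bound, I would let $g=x_0^{a_0}x_1^{a_1}\cdots x_{2k+1}^{a_{2k+1}}\in I_{B_{2k}}^{(s(k-1))}$ be an arbitrary monomial and invoke Lemma~\ref{x0xn+1} to reduce to $a_0=a_{2k+1}=d\ge 0$. Setting $A=\sum_{i=1}^{2k}a_i$ and indexing the maximal subtrees as $S_i=\{1,\dots,2k\}\setminus\{i,i+1\}$ (mod $2k$), Proposition~\ref{carat2} reads
\[
A-a_i-a_{i+1}+d=w_{S_i}(g)\ge s(k-1),\qquad i=1,\dots,2k.
\]
Summing the $2k$ inequalities and using the double-counting identity $\sum_i(a_i+a_{i+1})=2A$ yields $(2k-2)A+2kd\ge 2ks(k-1)$, so $A\ge k(s(k-1)-d)/(k-1)$ when $s(k-1)\ge d$. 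A direct computation then gives
\[
\deg(g)=2d+A\ge 2d+\frac{k(s(k-1)-d)}{k-1}=sk+\frac{(k-2)d}{k-1}\ge sk,
\]
while in the remaining case $d>s(k-1)$ one already has $\deg(g)\ge 2d\ge 2s(k-1)\ge sk$ (using $k\ge 2$). Combined with the upper bound, this gives $\alpha_{2k,s(k-1)}=sk$.

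The only point requiring care is the case split according to whether $d\ge s(k-1)$; otherwise the proof is a clean cyclic averaging. No induction on $s$ is needed here because the extremal monomial $f^s$ saturates all $2k$ weight inequalities simultaneously, which is precisely what makes the averaging tight. This is in sharp contrast with the odd case (Proposition~\ref{casodispari}), where the $2k-1$ maximal subtrees cannot all be simultaneously tight and an inductive reduction via Lemma~\ref{x1xn} is forced.
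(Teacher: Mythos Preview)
Your proof is correct and is more direct than the paper's. The paper proceeds by induction on $s$: the base case $s=1$ is handled by an ad hoc case analysis on the support of a putative degree-$(k-1)$ monomial, and the inductive step repeatedly invokes Lemma~\ref{x1xn} (subtracting $1$ from every base exponent) together with a cyclic symmetrization $h=\prod_{t=1}^{2k}g_{\sigma^t}$ to reduce to smaller $s$. Your averaging over the $2k$ trees $S_i$ bypasses all of this machinery and needs no auxiliary result beyond the reduction $a_0=a_{2k+1}$ from Lemma~\ref{x0xn+1}. Two small remarks: first, the case split on whether $d\le s(k-1)$ is unnecessary, since dividing $(k-1)A+kd\ge ks(k-1)$ by $k-1>0$ is valid for every $d$, and the resulting estimate $\deg(g)\ge sk+\tfrac{(k-2)d}{k-1}$ already gives $\deg(g)\ge sk$ unconditionally for $k\ge 2$; second, your closing comment about the odd case is inaccurate --- the monomial $(x_1\cdots x_{2k-1})^s$ does make all $2k-1$ weight inequalities simultaneously tight, and the identical averaging yields $\deg(g)\ge s(2k-1)+\tfrac{(2k-5)d}{2k-3}$, so Proposition~\ref{casodispari} also admits your shorter argument (for $k\ge 3$) and the paper's inductive detour there is equally avoidable.
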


\begin{proof}
We prove the statement by induction on $s$. For the case $s=1$, since the bipyramid is build over a polygon with $2k$ vertices, we can consider the following monomials
\[
f_1=x_1x_3\cdots x_{2k-1}
\]
and
\[
f_2=x_2x_4\cdots x_{2k}.
\]
We easily verify that
\[
w_S(f_1)=w_S(f_2)=k-1, \quad \forall S\in \TT_{2k-2,Q_{2k}},
\]
hence $f,g \in I_{2k}^{(k-1)}$.

Let us show now that $I_{B_{2k}}^{(k-1)}$ does not contain any monomial of degree strictly less than $k$.
For this aim, let  $g=x_0^{a_0}x_1^{a_1}\cdots x_{2k+1}^{a_{2k+1}}\in I_{B_{2k}}^{(k-1)}$ with $\deg(g)=k-1$.
By Lemma \ref{x0xn+1} we first suppose that   $a_0=a_{2k+1}=d>0$, hence $\sum_{i=1}^{2k}a_i=k-1-2d$. Then
\[
w_S(g)=\sum_{i\in S}a_i+a_0\leq k-1-2d+d=k-1-d
\]
for at least one $S\in  \TT_{2k-2,Q_{2k}}$ hence $g\notin I_{B_{2k}}^{(k-1)}$.

Thus, we can reduce to consider a monomial  $g$, in the variables $x_1,\dots, x_{2k}$,  with $\deg(h)=k-1$.
We start with the case of $g$ consisting of $k-1$ variables. Then we can choose a tree $S$ avoiding the vertex associated to one of these variables obtaining  $w_{S}(g)< k-1$ and so  $h\notin I_{B_{2k}}^{(k-1)}$.
If $g$ has, at least, one variable $x_i$ of degree  $a_i$, then we choose the tree $S$ avoiding $\{i\}$ and we obtain $w_{S}(h)\leq k-1-d$ and hence $h\notin I_{B_{2k}}^{(k-1)}$. 

Suppose now the statement is true for $s-1$ and we prove it for $s$.
For the case $s\geq 2$  let $f_i$ be a monomial of degree  $k$  such that  $f \in I_{B_{2k}}^{(k-1)}$, by case $s=1$. Hence $f^s$ is in $I_{B_{2k}}^{(s(k-1))}$.  Thus it is enough to prove that,  in $I_{B_{2k}}^{(s(k-1))}$, there are not monomials of degree $sk-1$. Suppose there exists such $g=x_0^{a_0}x_1^{a_1}\cdots x_{2k+2}^{a_{2k+2}}\in I_{B_{2k}}^{(s(k-1))}$ with $\deg(h)=sk-1$.  Suppose first $a_0=a_{2k+1}=d> 0$. Hence $\sum_{i=1}^{2k}a_i=sk-1-2d$. We distinguish two cases.
\begin{itemize}
\item[(i)] If $a_1\not=0$ for all $i=1, \dots, n$ we can apply Lemma \ref{x1xn} obtaining a monomial $\tilde{g}$ of degree $sk-1-2k=(s-2)k-1$ with 
\[
w_S(\tilde{g})\geq s(k-1)-(2k-2)=(s-2)(k-1).
\]
Hence $\tilde{g}\in I_{B_{2k}}^{((s-2)(k-1))}$ and this contradicts the inductive hypothesis since $\deg(\tilde{g})<(s-2)(k-1)$.
\item[(ii)] Suppose now there exists at least one index $i\in \{1, \dots, 2k\}$ such that $a_i=0$. Let $\sigma\in S_{2k}$ be the permutation sending $j$ to $j+1$ and define $g_\sigma$ the monomial obtained by applying $\sigma$ to the variables in $h$. Then define the monomial
$h=\Pi_{t=1}^{2k}g_{\sigma^t}$. One has $\deg(h)=(2k)[sk-1]$ and, by Remark \ref{rmksomma}, $w_S(h)\geq 2ks(k-1)$. Moreover, the exponent of each variable associated to a vertex of  $Q_{2k}$ has degree $sk-1-2d$.  We apply $sk-1-2d$ times Lemma \ref{x1xn}  to the monomial $h$ obtaining a monomial $\tilde{h}$ such that
\[
\tilde{h}=(x_0x_{2k+1})^{2dk}
\]
and
\[
w_S(\tilde{h})\geq 2ks(k-1)-(sk-1-2d)(2k-2)=4dk-4d+2k-2 \quad \forall S\in \TT_{2k-2,Q_{2k}}
\]
which is a contradiction since, for $d>0$ and $k\geq 2$ one has $w_S(\tilde{h})=2dk<4dk-4d+2k-2$.
\end{itemize}
Thus we can consider $a_0=a_{2k+1}=0$.  We distinguish two cases.
\begin{itemize}
\item[1)] $a_i\not=0$ for all $i=1,\dots, 2k$. In such case we can apply Lemma \ref{x1xn} to $g$ obtaining a new monomial  $\tilde{g}$ of degree $\leq (s-2)k-1$ such that $w_S(\tilde{g})\geq s(k-1)-(2k-2_=(s-2)(2k-3)$. Thus, by Proposition \ref{impo}, $\tilde{g}\in  I_{B_{2k}}^{((s-2)(k-1))}$ contradicting the inductive hypothesis.
\item[2)] There exists at least one index $i\in \{1, \dots, 2k\}$ such that $a_i=0$.  Then define, as before, the monomial
$h=\Pi_{t=1}^{2k}g_{\sigma^t}$. One has $\deg(h)=(2k)[sk-1]$ and $w_S(h)\geq 2ks(k-1)$. Moreover, the exponent of each variable associated to a vertex of  $Q_{2k-1}$ has degree $(sk-1)$. We apply $sk-2$ times Lemma \ref{x1xn}  to the monomial $h$ obtaining a monomial $\tilde{h}$ such that
\[
\tilde{h}=x_1\cdots x_{2k}
\]
and
\[
w_S(\tilde{h})\geq 2ks(k-1)-(sk-2)(2k-2)=4(k-1) \quad \forall S\in \TT_{2k-2,Q_{2k}}
\]
which is a contradiction since $w_S(\tilde{h})=2(k-1)$.
\end{itemize}
\end{proof}

We  prove that, increasing the number of vertices in the base, the minimal degree of a generator in the $m-$th symbolic power does not increase.

\begin{Prop}\label{alfa1}
$\alpha_{n,m} \geq \alpha_{n+1,m}$
\end{Prop}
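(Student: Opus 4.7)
The plan is to take any monomial $f=x_0^{a_0}x_1^{a_1}\cdots x_{n+1}^{a_{n+1}}\in I_{B_n}^{(m)}$ realizing the minimum degree $\alpha_{n,m}$ and build from it, by a simple relabeling, a monomial $f'\in I_{B_{n+1}}^{(m)}$ of the same degree. Explicitly, I would move the exponent of the old lower apex $x_{n+1}$ onto the new lower apex $x_{n+2}$ of $B_{n+1}$, setting
\[
f'=x_0^{a_0}x_1^{a_1}\cdots x_n^{a_n}x_{n+2}^{a_{n+1}}\in k[x_0,\ldots,x_{n+2}];
\]
then $\deg f'=\deg f=\alpha_{n,m}$ and the only base vertex of $Q_{n+1}$ that fails to appear in $f'$ is the newly added vertex $n+1$. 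Once $f'\in I_{B_{n+1}}^{(m)}$ is established, the inequality $\alpha_{n+1,m}\le\deg f'=\alpha_{n,m}$ follows immediately.

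The membership $f'\in I_{B_{n+1}}^{(m)}$ will be checked via Proposition~\ref{carat2}: one needs $w_{0,S'}(f'),\,w_{n+2,S'}(f')\ge m$ for every $S'\in\TT_{n-1,Q_{n+1}}$. Since $Q_{n+1}$ is a cycle, each such $S'$ is the path obtained by removing a pair of adjacent vertices from $Q_{n+1}$. The verification splits naturally into cases according to whether the new base vertex $n+1$ belongs to $S'$ or not, and in each case the idea is to exhibit a subtree $S\in\TT_{n-2,Q_n}$ whose weights on $f$ dominate the corresponding weights of $f'$ on $S'$, so that the bound $\ge m$ transfers from the hypothesis $f\in I_{B_n}^{(m)}$.

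If $n+1\in S'$, the deleted adjacent pair is some $\{i,i+1\}\subset\{1,\ldots,n\}$, which is also an edge of $Q_n$, so $S:=\{1,\ldots,n\}\setminus\{i,i+1\}$ lies in $\TT_{n-2,Q_n}$; since $x_{n+1}$ is absent from $f'$, one obtains the exact equalities $w_{0,S'}(f')=w_{0,S}(f)$ and $w_{n+2,S'}(f')=w_{n+1,S}(f)$, both $\ge m$ by assumption on $f$. If $n+1\notin S'$, the deleted pair is $\{n,n+1\}$ or $\{n+1,1\}$, forcing $S'=\{1,\ldots,n-1\}$ or $\{2,\ldots,n\}$; these are dominated by the genuine subtrees $S=\{1,\ldots,n-2\}$ or $S=\{2,\ldots,n-1\}$ of $\TT_{n-2,Q_n}$ (obtained by removing the adjacent pair $\{n-1,n\}$ or $\{1,n\}$ from $Q_n$), in the sense that $S'\supset S$ while the extra base vertex contributes a nonnegative exponent to each weight. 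I do not expect any serious obstacle here: the argument is a direct bookkeeping verification, and the only mild subtlety is the asymmetry of the two ``boundary'' subcases, both resolved by the same $S'\supsetneq S$ domination.
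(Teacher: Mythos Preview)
Your proposal is correct and follows essentially the same approach as the paper: relabel the lower apex $x_{n+1}\mapsto x_{n+2}$ and verify membership in $I_{B_{n+1}}^{(m)}$ by comparing each $(n-1)$-subtree $S'$ of $Q_{n+1}$ with a suitable $(n-2)$-subtree $S$ of $Q_n$, splitting according to whether the new base vertex $n+1$ lies in $S'$. The only cosmetic difference is that the paper first invokes Remark~\ref{impo} to assume $a_0=a_{n+1}$ and work with the single weight $w_S$, whereas you track $w_{0,S'}$ and $w_{n+2,S'}$ separately; your version is slightly more explicit about the path structure of the subtrees.
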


\begin{proof}
Suppose $\alpha_{n,m}=k$, then there exists a monomial  $f\in k[x_0, \dots, x_{n+1}]$ of degree $k$, such that $f\in I^{(m)}_n$. Moreover, by Proposition \ref{carat2} and Remark  \ref{impo} one has $w_{S}(f)\geq m$, $\forall S\in \TT_{n-2,Q_n}$. Let $\tilde{f} \in k[x_0, \dots, x_{n+2}]$ the monomial obtained by $f$ substituting the variable $x_{n+1}$ with $x_{n+2}$.
In the new $(n+1)-$gon $Q_{n+1}$, each subtree $S'$ of length $n+1-2=n-1$  containing  the new vertex $\{n+1\}$ (of $Q_{n+1}$ ) contains a tree  $S$ with $n-2$ vertices of the previous  $n-$gon $Q_n$. It is easy to observe that $w_{S'}(\tilde{f})=w_{S}(f) \geq m$. On the other hand, if $S'$ is a tree of length $n-1$ not containing $\{n+1\}$, then it contains two trees $S$ of  length $n-2$ for which $w_{S}(f) \geq m$, then we have $w_{S'}(\tilde{f})\geq w_{S}(f) \geq m$. In conclusion, we get $w_{S'}(\tilde{f})\geq m$, $\forall S'\in \TT_{n-1,Q_{n+1}}$. Hence  $\tilde{f}\in I_{n+1}^{(m)}$ from which we get $\alpha_{n+1,m}\leq k$.
\end{proof}

\begin{Rmk}\rm
When $s$ is even, there is another generator of minimal degree in $I_{B_{2k}}^{(s(k-1))}$ different from $f_1^s$ and $f_2^s$, where $f_1$ and $f_2$, according to Propositon \ref{casopari} are the two monomial defined by the product of variables associated to non-consecutive vertices in $Q_{2k}$. Let $s=2r$, then the other generator is $f_3=(x_1x_2\cdots x_{2k})^r$. As a matter of fact one has
\[
w_S(\tilde{h})=r(2k-2)=\frac{s}{2}(2k-2)=s(k-1) \quad \forall S\in \TT_{2k-2,Q_{2k}}.
\]
\end{Rmk}

Finally, using Propositions \ref{casopari} and \ref{alfa1} we can choose that, increasing the number of vertices in the base, the minimal degree of a generator in the $m-$th symbolic power becomes stationary. In \cite{franci} the reader can find many examples and conjectures about the 
behavior of $\alpha_{n,m}$ with respect to $\alpha_{n,m-1}$.
 
\begin{Prop} 
$\alpha_{2k+t,k-1}=k$ $\forall t\geq 0$.
\end{Prop}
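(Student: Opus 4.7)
The plan is to prove the equality by establishing the two inequalities $\alpha_{2k+t,k-1}\le k$ and $\alpha_{2k+t,k-1}\ge k$ separately. The upper bound is immediate from what we already have; the lower bound requires a short double-counting argument over the set $\TT_{n-2,Q_n}$.

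For the upper bound, Proposition \ref{casopari} with $s=1$ gives $\alpha_{2k,k-1}=k$. Iterating Proposition \ref{alfa1} exactly $t$ times then yields
\[
\alpha_{2k+t,k-1}\ \le\ \alpha_{2k+t-1,k-1}\ \le\ \cdots\ \le\ \alpha_{2k,k-1}\ =\ k.
\]

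For the lower bound I would take any monomial $f=x_0^{a_0}x_1^{a_1}\cdots x_{n+1}^{a_{n+1}}\in I_{B_n}^{(k-1)}$, where $n=2k+t$. By Lemma \ref{x0xn+1} and Remark \ref{impo} we may assume $a_0=a_{n+1}=d$, so that $w_S(f)=\sum_{j\in S}a_j+d\ge k-1$ for every $S\in\TT_{n-2,Q_n}$. The key observation is that, because $Q_n$ is a cycle, the subtrees on $n-2$ vertices are exactly the $n$ subpaths $S_i$ obtained by deleting a pair $\{i,i+1\}$ of consecutive vertices (indices mod $n$); indeed any other pair of deleted vertices disconnects the cycle into two components, which is not a tree. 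Writing $A=\sum_{j=1}^n a_j$, the $n$ inequalities $w_{S_i}(f)\ge k-1$ sum to
\[
(n-2)A+nd\ \ge\ n(k-1),
\]
since each $a_j$ appears in exactly $n-2$ of the sums. Therefore
\[
\deg(f)\ =\ 2d+A\ \ge\ 2d+\frac{n(k-1)-nd}{n-2}\ =\ \frac{(n-4)d+n(k-1)}{n-2}\ \ge\ \frac{n(k-1)}{n-2},
\]
using $n\ge 4$ and $d\ge 0$ in the final step.

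The subtle step is extracting an \emph{integer} lower bound of $k$ from the inequality just obtained, since the simple estimate $\frac{n(k-1)}{n-2}$ is strictly less than $k$ as soon as $t>0$. Rewriting
\[
\frac{n(k-1)}{n-2}\ =\ (k-1)+\frac{2(k-1)}{n-2},
\]
we see that this quantity is strictly larger than $k-1$ (the fractional correction is positive for $k\ge 2$). Since $\deg(f)$ is a non-negative integer, this forces $\deg(f)\ge k$, completing the lower bound and hence the equality. I do not expect any real obstacle beyond a careful identification of which subsets of $Q_n$ constitute the elements of $\TT_{n-2,Q_n}$, which is needed so that each $a_j$ is counted exactly $n-2$ times in the double sum.
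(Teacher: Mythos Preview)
Your argument is correct and takes a genuinely different route from the paper. The paper proves the lower bound by induction on $t$: assuming $\alpha_{2k+t,k-1}=k$, it supposes a monomial $f\in I_{B_{2k+t+1}}^{(k-1)}$ of degree $k-1$ exists, then (after using the cyclic symmetry of $Q_{2k+t+1}$ to arrange that $x_{2k+t+1}\nmid f$) maps $f$ down to a monomial in $I_{B_{2k+t}}^{(k-1)}$ of the same degree, contradicting the inductive hypothesis. Your approach is instead a direct averaging over the $n$ paths in $\TT_{n-2,Q_n}$: summing the $n$ inequalities $w_{S_i}(f)\ge k-1$ gives $(n-2)A+nd\ge n(k-1)$, from which $\deg(f)>k-1$ and hence $\deg(f)\ge k$ by integrality. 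Your identification of $\TT_{n-2,Q_n}$ with the $n$ subpaths obtained by deleting two consecutive vertices is exactly right, and is what makes each $a_j$ appear $n-2$ times.

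Your method is more elementary---it avoids the induction and the somewhat delicate symmetry step in the paper's case (ii)---and in fact it yields more: the same count with $k-1$ replaced by an arbitrary $m$ gives $\deg(f)\ge \frac{nm}{n-2}$ for every $f\in I_{B_n}^{(m)}$, which immediately furnishes the lower bound $\gamma(I_{B_n})\ge \frac{n}{n-2}$ in Theorem~\ref{main} without passing through Propositions~\ref{casodispari} or~\ref{casopari}. The paper's inductive reduction, on the other hand, is closer in spirit to the comparison results like Proposition~\ref{alfa1} and keeps the argument phrased purely in terms of the combinatorics of passing from $Q_n$ to $Q_{n+1}$.
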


\begin{proof}
From Propositions \ref{alfa1} and \ref{casopari} we know that $\alpha_{2k+t,k-1}\leq k$. We proof by induction on $t$ that we cannot have $\alpha_{2k+t,k-1}< k$. For $t=0$, it follows from Proposition \ref{casopari}. Suppose now that  $\alpha_{2k+t,k-1}=k$ and we want to prove that $\alpha_{2k+t+1,k-1}=k$. If  $\alpha_{2k+t+1,k-1}=k-1$ then there exists a monomial $f\in I_{B_{2k+t+1}}^{(k-1)}$ of degree $k-1$. By Proposition \ref{carat2} one has $w_S(f)\geq k-1$, for all $S\in \TT_{2k+t-1,Q_{2k+t+1}}$.  We can observe that the  $(2k+t-1)-$trees $S$ of $Q_{2k+t+1}$ containing the new vertex $\{2k+t+1\}$ are in correspondence 1:1 with the   $(2k+t-2)-$trees  $S'$ of $Q_{2k+t}$. Denote by $f'$ the image of  $f$ under the map which sends  the variable $x_{2k+t+1}$ to 1 and the  variable $x_{2k+t+2}$ (associated to the lower vertex of $B_{2k+t+1}$) to the variable $x_{2k+t+1}$ (associated to the lower vertex of $B_{2k+t}$). We distinguish two cases:
\begin{itemize}
\item[i)] $x_{2k+t+1}$ does not divide  $f$. In this situation one has $w_{S'}(f')=w_S(f)\geq k-1$ and $\deg(f')=k-1$. Hence  $\alpha_{2k+t,k-1}=k-1$ contradicting the inductive hypothesis.
\item[ii)] $x_{2k+t+1}$ divides $f$. Since the number $2k+t+1$ of vertices of $Q_{2k+t+1}$ is greater than the multiplicity $k-1$ of the symbolic power, it follows that there exists at least another monomial $g$ with $\deg(g)=k-1$, $w_S(g)\geq k-1$, for all $S\in \TT_{2k+t-1,Q_{2k+t+1}}$ and such that $x_{2k+t+1}$ does not divide $g$. Thus we return to case (i) obtaining again a contradiction.
\end{itemize}
\end{proof}

\subsection{Waldschmidt constant  of $I_{B_n}^{(m)}$}
We finally conclude computing the Waldschmidt constant of $I_{B_n}$, showing that this constant depends only on the number $n$ of edge in the base of $B_n$, independently if $n$ is even or odd, even though the two cases have different behavior in the $\alpha_{n,m}$'s.

\begin{proof}[Proof of Theorem \ref{main}]
To compute the Waldschmidt constant we reduce to consider a subsequence of all possible $m$ in the definition of the limit.

Let $n=2k$ be an even number. From Proposition \ref{casopari} one has $\alpha_{2k,s(k-1)}=sk$, that can be written as
$$\alpha_{n,s(\frac{n}{2}-1)}=s\frac{n}{2}$$ 
posing $k=\frac{n}{2}$. Then
$$\gamma(I_{B_n})=\lim_{s \to \infty}\frac{\alpha_{n,s(\frac{n}{2}-1)}}{s(\frac{n}{2}-1)} =\lim_{s \to \infty} \frac{s\frac{n}{2}}{s(\frac{n}{2}-1)}=\frac{\frac{n}{2}}{(\frac{n}{2}-1)}=\frac{n}{n-2}$$

Let $n=2k-1$ be an odd number. Taking  $s(2k-3)$, from Proposition \ref{casodispari}, one has $\alpha_{2k-1,s(2k-3)}=s(2k-1)$. Then
$$\gamma(I_{B_n})=\lim_{s \to \infty}\frac{\alpha_{2k-1,s(2k-3)}}{s(2k-3)}=\lim_{s \to \infty}\frac{s(2k-1)}{s(2k-3)}=\frac{2k-1}{2k-3}=\frac{n}{n-2}$$
\end{proof}

\markboth{Bibliografia}{Bibliografia}

\end{document}